\documentclass[12pt]{article}
\usepackage[T2A]{fontenc}
\usepackage[cp866]{inputenc}
\usepackage{amssymb,amsmath}
\usepackage{amsthm}
\usepackage{graphicx}
\usepackage{cite}
\usepackage{url}
\usepackage{srcltx}

\newtheorem{proposition}{Proposition}
\newtheorem{lemma}{Lemma}
\newtheorem{theorem}{Theorem}
\newtheorem{corollary}{Corollary}

\theoremstyle{definition}

\newtheorem{example}{Example}

\pagestyle{plain} \textheight=240mm \textwidth=160mm \topmargin=0mm \headheight=0mm
\headsep=0mm \oddsidemargin=0mm \voffset=-5mm

\newcommand{\cov}{\mathop{\rm cov}}
\def\MBP{\operatorname{MBP}\nolimits}
\def\MBPRE{\operatorname{MBPRE}\nolimits}
\def\MBPPLRE{\operatorname{MBPPLRE}\nolimits}

\begin{document}

\title{Maximal Branching Processes in \\ Random Environment\footnote{This is translation (from Russian) of https://doi.org/10.14357/19922264180206}}

\author{A. V. Lebedev\thanks{Department of Mathematical Statistics and Random
Processes, Faculty of Mechanics and Mathematics, Lomonosov Moscow State University,
Moscow, Russia; e-mail: \texttt{avlebed@yandex.ru}}}

\maketitle

\begin{abstract}
The work continues the author's many-year research in theory of maximal branching
processes, which are obtained from classical branching processes by replacing the
summation of descendant numbers with taking the maximum. One can say that in each
generation, descendants of only one particle survive, namely those of the particle
that has the largest number of descendants. Earlier, the author generalized processes
with integer values to processes with arbitrary nonnegative values, investigated
their properties, and proved limit theorems. Then processes with several types of
particles were introduced and studied. In the present paper we introduce the notion
of maximal branching processes in random environment (with a single type of
particles) and an important case of a ``power-law'' random environment. In the
latter case, properties of maximal branching processes are studied and the ergodic
theorem is proved. As applications, we consider gated infinite-server queues.

{\bf Key words:} maximal branching processes; random environment; ergodic theorem;
stable distributions; extreme value theory
\end{abstract}

\section{Introduction}

Classical objects of research in theory of stochastic processes are Galton--Watson
branching processes (with a single type of particles and discrete time) \cite{Har}.
Their extremal counterparts are referred to as maximal branching processes (MBPs).
Namely, summation of the number of particle descendants (when finding the size of the
next generation) is replaced with taking the maximum.

Let us recall the history of the question. MBPs were introduced and studied by
J.~Lamperti \cite{Lamp1,Lamp2} in 1970--1972, but were later completely abandoned by
researchers (though mentioned in the survey \cite{VatZub2}). A new stage of studying
the MBPs was started by A.V.~Lebedev in 2001. Processes with integer values were
generalized to processes with arbitrary nonnegative values \cite{Leb-2005d}. First,
MBPs with particles of a single type were studied (see the survey \cite{Leb-2009a}),
and then those with several types of particles (multi-type MBPs)~\cite{Leb-2012a}. At
present, the most complete overview of results and literature is given in
\cite[Chs.~4 and~5]{LebDiss}. Until recently, the author's studies on this subject
remained solitary. Only in 2012 they were unexpectedly pursued in the work of foreign
researchers O.~Aydogmus, A.P.~Ghosh, S.~Ghosh, and A.~Roitershtein \cite{CMBP}, who
introduced colored maximal branching processes. Their difference from multi-type MBPs
is that types (colors) of particles are determined after forming a generation, at
random, and the type influences further fecundity. Another distinction from the
author's approach was considering solely processes that go to infinity.

Let us recall basic notions and properties of MBPs with a single particle type.

Consider random processes with values in $\mathbb{Z}_+$ defined stochastically by
recurrence relations of the form
\begin{equation}\label{f1-41}
Z_{n+1}=\bigvee_{m=1}^{Z_n}\xi_{m,n},
\end{equation}
where $\bigvee$ stands for the maximum operation and where $\xi_{m,n}$, $m\ge 1$,
$n\ge 0$, are independent random variables with a common distribution $F$ on
$\mathbb{Z}_+$. We assume (as in the case of summation) that the result of taking the
maximum ``zero times'' (when $Z_n=0$) is zero.

One can say (by the analogy with Galton--Watson processes) that in each generation of
a maximal branching process descendants of only one particle survive, namely of the
particle that has the largest number of descendants. It is also clear that the set of
possible values of an MBP (for $n\ge 1$) coincides with the set of possible values of
the number of descendants. It follows from \eqref{f1-41} that the process is a
homogeneous Markov chain on this set.

Another interpretation of an MBP can be proposed in queueing theory, by considering
gated multi-server queues. These are queues with infinitely many servers where access
of customers to service is regulated by a gate. The gate is assumed to be open only
when all severs are free. Customers enter a queue with infinitely many waiting
places, and servicing is performed in stages. At the beginning of a stage, when the
gate opens, all customers in the queue instantly get access to servers and then are
being served in parallel and independently until all servers become free. At the
moment when all servers become free, the gate opens again for a new batch of
customers (that have arrived during this time period) and the next stage.

Note that this queueing system is very easy to manage: there is no need to keep a
permanent record of arriving and leaving customers, free and busy servers, etc. The
allocation of customers to servers (which are all free at that moment) is made once
and simultaneously at the beginning of each stage. Another advantage of a gated
system may manifest itself in a situation where customers in the queue and servers
are somehow separated from each other and establishment of communication requires
certain costs. For example, it may be disadvantageous (or impossible) to keep the
communication channel on all the time, and short connections from time to time may be
preferable.

Of course, any infinite-linear system is only an approximation to the case where the
real number of servers is large. On the other hand, it makes sense to study such
systems to estimate different characteristics of the service quality (which for any
queue with finitely many servers can only be worse).

Consider such a queue with discrete time, and assume that there is exactly one
arrival at each time instant. Then, since the servers operate in parallel, the time
required for servicing a current batch of arrivals (and hence, the number of arrivals
in the next batch) equals the maximum of their service times. Thus, if we denote by
$Z_n$ the duration of the $n$th stage and by $\xi_{m,n}$ the service times of
arrivals within it, we obtain exactly \eqref{f1-41}.

Discrete-time gated infinite-server queues with Poisson arrival flows were analyzed
in [10--13] (using other methods) and by the author in \cite{Leb-2003,Leb-2004}. In
this case it is necessary to specify what happens if a gate opens while the queue is
empty. It is most natural to assume that the queue waits for a new customer to
arrive, from which the next stage begins.

Note that models with parallel data processing have become very popular in recent
years due to the development of cloud computing technologies. At the same time, there
arises a need to study maxima of random variables. As a recent work on this topic,
note \cite{Obl}.

MBPs were introduced in \cite{Lamp1} (in connection with long-range percolation
models), where recurrence criteria for them were also obtained. Namely (assuming
$F(0)=0$): provided that
\begin{equation}\label{ulam}
\limsup_{x\to+\infty}x(1-F(x))<e^{-\gamma},
\end{equation}
where $\gamma=0,577\ldots\strut$ is the Euler constant, the chain $\{Z_n\}$ is
positively recurrent; and vice versa, if
$$
\liminf_{x\to+\infty}x(1-F(x))>e^{-\gamma},
$$
then $Z_n\to +\infty$, $n\to\infty$, almost surely (a.s.).

Then, in \cite{Lamp2}, the critical case $x(1-F(x))\to e^{-\gamma}$, $x\to+\infty$,
was considered taking into account further terms of the tail expansion at infinity.
It was shown that if $(e^\gamma x(1-F(x))-1)\ln x\to d$, $x\to+\infty$, then the
process is recurrent when $d<\pi^2/12$ and goes to infinity a.s.\ when $d>\pi^2/12$.

According to \eqref{f1-41} and the assumption on the case of $Z_n=0$, the process has
transition probabilities
$$
{\bf P}(Z_{n+1}\le j\,|\,Z_n=i)=F^i(j),\quad i,j\in \mathbb{Z}_+
$$
(where we assume $0^0=1$), which suggested in \cite{Leb-2005d} to consider Markov
chains on an arbitrary measurable set $T\subset \mathbb{R}_+$ with transition
probabilities
\begin{equation}\label{obo}
{\bf P}(Z_{n+1}\le y\,|\,Z_n=x)=F^x(y),\quad x,y\in T,
\end{equation}
where $F$ is also supported on $T$.

Such processes can be considered both in their own right and as limit processes (in
any sense) for MBPs on $\mathbb{Z}_+$ (normalized in a certain way). For instance,
they can be used to describe the behavior of gated infinite-server queues, in
particular, limit behavior under heavy traffic conditions, etc.

In particular, for a continuous-time gated infinite-server queue with Poisson
arrivals, the sequence of stage durations over a busy period satisfies \eqref{obo}
with $F(x)=\exp\{-\lambda {\bar B}(x)\}$, $x\ge 0$, where $\lambda$ is the arrival
flow density, $B(x)$ is the distribution function for the service time of a single
customer, and ${\bar B}(x)=1-B(x)$. Indeed, denote the duration of the $n$th stage by
$Z_n$. Given that $Z_n=x$, the number of arrivals in this stage is Poissonian with
parameter $\lambda x$, and $Z_{n+1}$ is the maximum of this (random) number of
independent random variables with distribution $B$. Thus,
$$
{\bf P}(Z_{n+1}\le y\,|\,Z_n=x)= \sum_{k=0}^\infty\frac{(\lambda x)^k}{k!}e^{-\lambda
x}B(y)^k= \exp\{-\lambda x {\bar B}(y)\}=F(y)^x.
$$

To describe the system over the whole time horizon, one needs MBPs with immigration
at the vanishing moment, etc.; see \cite[Section~4.3]{LebDiss}.

Below we will speak about maximal branching processes on $T$ and denote them by
$\MBP(T)$. Note that a similar generalization for Galton--Watson processes are
Ji\v{r}ina processes \cite{Jr2} (with a continuous state set and discrete time);
however, in this case $T$ can be any measurable subset of $\mathbb{R}_+$.

Equation \eqref{f1-41} for such MBPs does not hold in the general case, but according
to \eqref{obo} they admit an equivalent representation by a stochastic recurrence
sequence of the form
\begin{equation}\label{obo2}
Z_{n+1}=
\begin{cases}
\displaystyle F^{-1}(U_{n+1}^{1/Z_n}), & Z_n>0,\\ \displaystyle 0, & Z_n=0,
\end{cases}\qquad n\ge 0,
\end{equation}
where $F^{-1}(y)=\inf\{x:\: F(x)\ge y\}$; $U_n$, $n\ge 1$, are i.i.d.\ random
variables on $(0,1)$; and $Z_0\ge 0$ is independent of them. In this case the
distribution $F$ is still called the distribution of the number of (direct)
descendants.

In \cite{Leb-2005d}, a number of MBP properties (similarity condition, association,
monotonicity in parameters, degeneration condition) were obtained and an ergodic
theorem was proved.

Note that zero is always an absorbing state for an MBP. Thus, for
$\MBP(\mathbb{Z}_+)$ under condition \eqref{ulam} and $F(0)>0$, this leads to
degeneration a.s.\ \cite{Lamp1}. For an $\MBP(T)$, if $F(0)=0$, zero can simply be
excluded from the state set by considering a process with nonzero initial condition.
However, if zero is a limit point of $T$, the possibility of asymptotic convergence
to it as $n\to\infty$ remains. The following theorem provides sufficient conditions
to eliminate the possibilities for the process to go to either zero or infinity and
to make it ergodic. Here and in what follows we assume Harris ergodicity
\cite[ch.~1]{Borov}.

\smallskip
\textbf{Theorem A} \cite[Theorem 1]{Leb-2005d}. \emph{If for an\ $\MBP(T)$\textup,
$T\subset (0,+\infty)$\textup, the conditions\/ \eqref{ulam} and
$$
\liminf_{x\to 0}x(-\ln F(x))>e^{-\gamma}
$$
are fulfilled\textup, then the process is ergodic.}

\smallskip
Now we define MBPs in random environment (MBPREs).

In applications, ``random environment'' may describe various factors of natural,
technical, or social nature randomly varying in time and affecting the system (for
instance, speeding up or slowing down the operation of a queue).

Let us be given a sequence $F_l$, $l\ge 1$, of distributions on $\mathbb{Z}_+$; a
collection of independent random variables $\xi_{m,n,l}$, $m\ge 1$, $n\ge 0$, $l\ge
1$, with distributions $F_l$, $l\ge 1$; and a sequence of independent random
variables $\nu_n$, $n\ge 0$, with common distribution $G$ on $\mathbb{N}$ and
independent of the $\xi_{m,n,l}$, $m\ge 1$, $n\ge 0$, $l\ge 1$. Then an
$\MBPRE(\mathbb{Z}_+)$ can be defined with the help of the stochastic recurrence
relation of the form
\begin{equation}\label{f1-41-mvpss}
Z_{n+1}=\bigvee_{m=1}^{Z_n}\xi_{m,n,\nu_n}.
\end{equation}
The random environment is reflected here by the random variables $\nu_n$, $n\ge 1$,
on which the choice of a distribution $F_l$, $l\ge 1$, of the number of particle
descendants in each step depends.

According to \eqref{f1-41-mvpss} and the assumption on the case of $Z_n=0$, the
process has transition probabilities
$$
{\bf P}(Z_{n+1}\le j\,|\,Z_n=i)={\bf E}(F^i_\nu(j)),\quad i,j\in \mathbb{Z}_+,
$$
where $\nu$ has distribution $G$, which suggests to consider a process on an
arbitrary measurable set $T\subset \mathbb{R}_+$ with transition probabilities
\begin{equation}\label{obo-mvpss}
{\bf P}(Z_{n+1}\le y\,|\,Z_n=x)={\bf  E}(F^x_\nu(y)),\quad x,y\in T,
\end{equation}
where a family of distributions $F_s$, $s>0$, on $T$ and a random variable $\nu$ with
distribution~$G$ on $(0,+\infty)$ are assumed. In this way, we define $\MBPRE(T)$.

Such processes can be considered both in their own right and as limit processes (in
any sense) for the $\MBPRE(\mathbb{Z}_+)$ processes (normalized in a certain way).

Below we will study a class of processes with a power-law property $F_s(x)=F^s(x)$,
$s>0$, where $F$ is a distribution on $T$. In this case we will speak about a
``power-law'' random environment and denote the corresponding processes by
$\MBPPLRE(T)$.

Introduce the Laplace--Stieltjes transform $\varphi(u)={\bf E}e^{-u\nu}$. Then
equation \eqref{obo-mvpss} takes the form
\begin{equation}\label{obo-mvpss-2}
{\bf P}(Z_{n+1}\le y\,|\,Z_n=x)=\varphi(-x\ln F(y)),\quad x,y\in T.
\end{equation}
We face up the problem of analyzing properties of such processes.

\section{Basic Properties}

First of all, note that for an MBPPLRE we also have a constructive representation,
which follows from \eqref{obo-mvpss-2}, namely
\begin{equation}\label{obo2-mvpss}
Z_{n+1}=
\begin{cases}
\displaystyle F^{-1}(\exp\{-\varphi^{-1}(U_{n+1})/Z_n\}), & Z_n>0,\\ \displaystyle 0,
& Z_n=0,
\end{cases}
\qquad n\ge 0,
\end{equation}
where $U_n$, $n\ge 1$, are i.i.d.\ random variables on $(0,1)$ and where $Z_0\ge 0$
does not depend on them.

We prove a series of MBPPLRE properties by analogy with \cite{Leb-2005d} as the
following propositions.

\begin{proposition}
If\/ $\{Z_n\}$ is an\/ $\MBPPLRE(T)$ with $F(x)$\textup, then\/ $\{\lambda Z_n\}$ for
any\/ $\lambda>0$ is an\/ $\MBPPLRE(\lambda T)$ with $F^{1/\lambda}(x/\lambda)$.
\end{proposition}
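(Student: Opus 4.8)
The plan is to show that the scaled sequence $W_n:=\lambda Z_n$ is again a homogeneous Markov chain whose one-step transition law has exactly the form \eqref{obo-mvpss-2}, but with $F$ replaced by $\tilde F(x):=F^{1/\lambda}(x/\lambda)$ and with the \emph{same} Laplace--Stieltjes transform $\varphi$ (equivalently, the same environment variable $\nu$). Since $\{Z_n\}$ is Markov on $T$ and $x\mapsto\lambda x$ is a bijection $T\to\lambda T$, the process $\{W_n\}$ is automatically Markov on $\lambda T$, so it suffices to identify its transition probabilities.

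First I would check that $\tilde F$ is a bona fide distribution function supported on $\lambda T$: it is nondecreasing and right-continuous as a composition of the nondecreasing maps $y\mapsto y/\lambda$, $F$, and $t\mapsto t^{1/\lambda}$, it tends to $1$ as $y\to+\infty$, and since $F$ is supported on $T$ the function $\tilde F$ is supported on $\lambda T$. I would also note that $0$ stays absorbing, consistently with the dichotomy $0\in\lambda T\iff 0\in T$: $W_n=0\iff Z_n=0\Rightarrow Z_{n+1}=0\iff W_{n+1}=0$.

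Then I would compute, for $x,y\in\lambda T$ with $x>0$, using \eqref{obo-mvpss-2} for $\{Z_n\}$ and the identity $\ln\tilde F(y)=\tfrac1\lambda\ln F(y/\lambda)$,
$$
{\bf P}(W_{n+1}\le y\mid W_n=x)={\bf P}\bigl(Z_{n+1}\le y/\lambda\mid Z_n=x/\lambda\bigr)=\varphi\!\left(-\tfrac{x}{\lambda}\ln F(y/\lambda)\right)=\varphi\bigl(-x\ln\tilde F(y)\bigr),
$$
which is precisely \eqref{obo-mvpss-2} for $\{W_n\}$ with distribution $\tilde F$ and transform $\varphi$. Hence $\{W_n\}$ is an $\MBPPLRE(\lambda T)$ with $\tilde F(x)=F^{1/\lambda}(x/\lambda)$.

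Alternatively, and essentially equivalently, one can argue from the constructive representation \eqref{obo2-mvpss}: a short computation with the generalized inverse (substituting $x=\lambda t$ in the defining infimum) gives $\tilde F^{-1}(u)=\lambda F^{-1}(u^\lambda)$, whence $\tilde F^{-1}\bigl(\exp\{-\varphi^{-1}(U_{n+1})/(\lambda Z_n)\}\bigr)=\lambda F^{-1}\bigl(\exp\{-\varphi^{-1}(U_{n+1})/Z_n\}\bigr)=\lambda Z_{n+1}=W_{n+1}$, driven by the same i.i.d.\ sequence $\{U_n\}$. There is no genuine obstacle here; the only points requiring a little care are that scaling commutes correctly with the generalized inverse and with the exponent $1/\lambda$, and that one must keep the \emph{same} environment $\nu$ (equivalently the same $\varphi$) — it is exactly the power-law form $F_s=F^s$ that makes this self-consistency possible.
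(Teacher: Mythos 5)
Your proof is correct and follows essentially the same route as the paper: the core step is the same substitution in the transition-probability formula \eqref{obo-mvpss-2}, giving ${\bf P}(\lambda Z_{n+1}\le y\,|\,\lambda Z_n=x)=\varphi(-(x/\lambda)\ln F(y/\lambda))=\varphi(-x\ln F^{1/\lambda}(y/\lambda))$. The extra checks you include (that $F^{1/\lambda}(\cdot/\lambda)$ is a distribution on $\lambda T$, and the alternative argument via the generalized inverse in \eqref{obo2-mvpss}) are sound but not needed beyond what the paper records.
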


\begin{proof}
Make a substitution in \eqref{obo-mvpss-2}. Indeed,
$$
\begin{aligned}[b]
{\bf P}(\lambda Z_{n+1}\le y\,|\,\lambda Z_n=x)&={\bf P}(Z_{n+1}\le
y/\lambda\,|\,\lambda Z_n=x/\lambda)\\ &=\varphi(-(x/\lambda)\ln F(y/\lambda))\\
&=\varphi(-x\ln F^{1/\lambda}(y/\lambda)).
\end{aligned}\eqno\qedhere
$$
\end{proof}

This property implies closeness of the class $\MBPPLRE(\mathbb{R}_+)$ with respect to
multiplication by $\lambda>0$ and closeness of $\MBPPLRE(\mathbb{Z}_+)$ for
$\lambda\in \mathbb{N}$.

\begin{lemma}
For any numbers $Z'_0\le Z''_0$ and\/ $U''_n\le U''_n$\textup, $n\ge 1$\textup, with
$Z'_0, Z''_0\ge 0$ and\/ $U'_n,U''_n\in (0,1)$\textup, number sequences\/ $\{Z'_n\}$
and\/ $\{Z''_n\}$ constructed according to\/ \eqref{obo2-mvpss} satisfy the condition
$Z'_n\le Z''_n$ for all\/ $n\ge 0$.
\end{lemma}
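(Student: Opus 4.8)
The plan is a pathwise induction on $n$, entirely parallel to the corresponding monotonicity lemma for $\MBP(T)$ in \cite{Leb-2005d}. The base case $n=0$ is the hypothesis $Z'_0\le Z''_0$. For the inductive step, assume $Z'_n\le Z''_n$ and split according to the two branches of \eqref{obo2-mvpss}. If $Z'_n=0$, then $Z'_{n+1}=0\le Z''_{n+1}$ because every term produced by \eqref{obo2-mvpss} is nonnegative, and we are done. If $Z'_n>0$, then $0<Z'_n\le Z''_n$, so both sequences use the first branch, and it suffices to show that the map
$$
g(z,u)=F^{-1}\bigl(\exp\{-\varphi^{-1}(u)/z\}\bigr)
$$
is nondecreasing in $z\in(0,+\infty)$ and in $u\in(0,1)$ separately; then $Z'_{n+1}=g(Z'_n,U'_{n+1})\le g(Z''_n,U'_{n+1})\le g(Z''_n,U''_{n+1})=Z''_{n+1}$, which closes the induction.

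The monotonicity of $g$ rests on three elementary facts. First, $F^{-1}(w)=\inf\{x:F(x)\ge w\}$ is nondecreasing in $w$, since enlarging $w$ shrinks the set $\{x:F(x)\ge w\}$ and hence can only increase its infimum. Second, $\varphi(s)={\bf E}e^{-s\nu}$ is nonincreasing in $s\ge 0$ because $\nu>0$, and $\varphi(s)\to 0$ as $s\to+\infty$; hence the generalized inverse $\varphi^{-1}(u)=\inf\{s\ge 0:\varphi(s)\le u\}$ is well defined and finite for $u\in(0,1)$, takes values in $[0,+\infty)$, and is nonincreasing in $u$. Third, $t\mapsto e^{-t/z}$ is increasing for each fixed $z>0$, and for each fixed $t\ge 0$ the quantity $e^{-t/z}$ is nondecreasing in $z>0$ (it rises toward $1$ as $z\to+\infty$). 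Chaining these: if $u'\le u''$, then $\varphi^{-1}(u')\ge\varphi^{-1}(u'')\ge 0$, so $\exp\{-\varphi^{-1}(u')/z\}\le\exp\{-\varphi^{-1}(u'')/z\}$ and thus $g(z,u')\le g(z,u'')$; and if $0<z'\le z''$, then, since $\varphi^{-1}(u)\ge 0$, we have $\varphi^{-1}(u)/z'\ge\varphi^{-1}(u)/z''$, so $\exp\{-\varphi^{-1}(u)/z'\}\le\exp\{-\varphi^{-1}(u)/z''\}$ and thus $g(z',u)\le g(z'',u)$.

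I do not expect a real obstacle here; the only point that needs care is bookkeeping the directions of monotonicity of the two generalized inverses together with the nonnegativity of $\varphi^{-1}$. Note that $\varphi^{-1}$ is \emph{nonincreasing} while $F^{-1}$ is nondecreasing, and it is the composition with the decreasing exponent $-\varphi^{-1}(u)/z$ that makes the net dependence on both $z$ and $u$ increasing, mirroring the role of $u^{1/z}$ in \eqref{obo2}. Since $\varphi$ need not be continuous or strictly decreasing, it is cleanest to work throughout with the infimum definition of $\varphi^{-1}$ rather than with a literal inverse; all the inequalities above use only monotonicity, so this causes no difficulty.
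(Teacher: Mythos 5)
Your proof is correct and follows essentially the same route as the paper: induction on $n$ combined with monotonicity of the recursion map in \eqref{obo2-mvpss}, the paper merely citing the nondecreasing character of $F^{-1}$ where you additionally spell out the monotonicity of $\varphi^{-1}$, the nonnegativity of the exponent, and the $Z'_n=0$ case. The extra bookkeeping is harmless and arguably makes the argument more complete than the published two-line version.
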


\begin{proof}
By the condition, we have $Z'_0\le Z''_0$. Assume that $Z'_n\le Z''_n$ for some $n\ge
0$. Then, since $F^{-1}$ is a nondecreasing function, from $U'_{n+1}\le U''_{n+1}$
and equation~\eqref{obo2-mvpss} we obtain $Z'_{n+1}\le Z''_{n+1}$. Now the claim of
the lemma holds by the induction principle.
\end{proof}

Recall the notion of association of random variables \cite{EPW, Bul}.

A multivariable function $f(x)$ with $x=(x_1,\dots x_n)$ is said to be monotonically
nondecreasing if $x'_i\le x''_i$, $1\le i\le n$, implies $f(x')\le f(x'')$. Random
variables of a tuple $\zeta=(\zeta_1,\dots,\zeta_n)$ are said to be associated if
$\cov(f(\zeta),g(\zeta))\ge 0$ holds for all monotonically nondecreasing $f$ and $g$
such that this covariance exists. A random process or a field $\{\zeta(t):\: t\in
\mathcal{T}\}$ are said to be associated if their values $\zeta(t_1),
\dots,\zeta(t_n)$ are associated for any finite set $\{t_1,\dots,t_n\}\subset
\mathcal{T}$.

According to [\citen{EPW}; \citen{Bul}, Theorem~1.8], independent random variables
are always associated; monotonically nondecreasing functions of associated random
variables also possess this property.

\begin{proposition}
Any MBPPLRE is associated.
\end{proposition}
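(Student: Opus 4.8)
The plan is to deduce association of the whole process $\{Z_n\}$ from the constructive representation \eqref{obo2-mvpss} together with Lemma~2 and the standard facts about association recalled just above. First I would fix an arbitrary $n\ge 0$ and consider the random vector $(Z_0,U_1,\dots,U_n)$, whose components are independent (by the standing assumptions $Z_0$ is independent of the $U_k$, and the $U_k$ are i.i.d.). By the cited result of \cite{EPW} and \cite[Theorem~1.8]{Bul}, independent random variables are associated, so this vector is associated.

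Next I would observe that for each $k\le n$ the value $Z_k$ is obtained from $(Z_0,U_1,\dots,U_k)$ by iterating the map in \eqref{obo2-mvpss}, and this map is monotonically nondecreasing in all of its arguments: indeed Lemma~2 says precisely that $Z_k$ is a nondecreasing function of the initial value and of the $U$'s fed into it (the lemma is stated for deterministic inputs, which is exactly what is needed here). Hence each $Z_k$, $0\le k\le n$, is a monotonically nondecreasing function of the associated vector $(Z_0,U_1,\dots,U_n)$ (trailing coordinates $U_{k+1},\dots,U_n$ simply do not appear). Since monotonically nondecreasing functions of associated random variables are again associated, the vector $(Z_0,Z_1,\dots,Z_n)$ is associated. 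As $n$ was arbitrary, the process $\{Z_n\}$ is associated by definition.

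I do not expect a genuine obstacle here; the only point requiring a little care is making sure Lemma~2 is applied in the right direction — it is stated for arbitrary numbers, so it legitimately certifies the coordinatewise monotonicity of the functional $(z_0,u_1,\dots,u_k)\mapsto z_k$ that we need, rather than only a comparison between two runs. One should also note explicitly that the degenerate branch $Z_n=0\Rightarrow Z_{n+1}=0$ preserves monotonicity, so the piecewise definition in \eqref{obo2-mvpss} causes no trouble.

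\begin{proof}
Fix $n\ge 0$. By the standing assumptions, $Z_0$ and $U_1,\dots,U_n$ are independent, hence associated \cite[Theorem~1.8]{Bul}. For each $k$ with $0\le k\le n$, iterating \eqref{obo2-mvpss} expresses $Z_k$ as a function $Z_k=\psi_k(Z_0,U_1,\dots,U_k)$; by Lemma~2 this function is monotonically nondecreasing in each of its arguments (the case distinction at $Z_n=0$ clearly preserves monotonicity). Viewing $\psi_k$ as a function of $(Z_0,U_1,\dots,U_n)$ that ignores the last $n-k$ coordinates, we see that $Z_0,Z_1,\dots,Z_n$ are monotonically nondecreasing functions of the associated tuple $(Z_0,U_1,\dots,U_n)$, and therefore are themselves associated. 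Since $n$ was arbitrary, the process $\{Z_n\}$ is associated.
\end{proof}
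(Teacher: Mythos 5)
Your argument is exactly the paper's proof of this proposition, just written out in more detail: the paper likewise observes that, by the monotonicity lemma (Lemma~1 in the paper's numbering), any finite collection $Z_{i_1},\dots,Z_{i_m}$ consists of nondecreasing functions of the independent (hence associated) random variables $Z_0$ and $U_1,\dots,U_{\max\{i_1,\dots,i_m\}}$, and then invokes the closure of association under nondecreasing maps. The proposal is correct.
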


\begin{proof}
It suffices to note that, by Lemma~1, any $Z_{i_1},\dots, Z_{i_m}$ are nondecreasing
functions of the independent random variables $Z_0$ and $U_n$, $1\le
n\le\max\{i_1,\dots,i_m\}$.
\end{proof}

To establish the monotonicity in parameters, introduce a (partial) ordering relation
between distributions: $F_1\prec F_2$ if $F_1(x)\ge F_2(x)$, $\forall x$. Note that
$F_1\prec F_2$ implies $F^{-1}_1(y)\le F^{-1}_2(y)$, $\forall y\in (0,1)$.

Denote by $Z=\mathcal{Z}(F,G,H)$ an MBPPLRE with $Z_0$ having distribution $H$.

\begin{proposition}
If\/ $F'\prec F''$\textup, $G'\prec G''$\textup, and\/ $H'\prec H''$\textup, then one
can construct processes $Z'=\mathcal{Z}(F',G',H')$ and\/
$Z''=\mathcal{Z}(F'',G'',H'')$ on the same probability space so that\/ $Z'_n\le
Z''_n$ for all\/ $n\ge 0$ a.s.
\end{proposition}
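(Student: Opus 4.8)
The plan is to use the constructive representation \eqref{obo2-mvpss} together with a coupling argument, exactly in the spirit of Lemma~1 and Proposition~2. The key observation is that we must translate the three orderings $F'\prec F''$, $G'\prec G''$, $H'\prec H''$ into pointwise inequalities between the driving functions and the driving random variables of the recurrence, on a single probability space. For the initial distributions, I would realize $Z'_0$ and $Z''_0$ via quantile transforms of a single uniform variable $V$: set $Z'_0=(H')^{-1}(V)$ and $Z''_0=(H'')^{-1}(V)$; since $H'\prec H''$ gives $(H')^{-1}\le(H'')^{-1}$, we get $Z'_0\le Z''_0$ a.s. Similarly, the environment enters only through $\varphi^{-1}$, and $G'\prec G''$ translates into an inequality between the corresponding Laplace--Stieltjes transforms $\varphi'$ and $\varphi''$; the point is that $G'\prec G''$ means $\nu''$ is stochastically larger, hence $\varphi'(u)\ge\varphi''(u)$ for all $u>0$, and therefore for the generalized inverses (with respect to the argument of $\varphi$, which is decreasing) one gets the appropriate monotone relation between $(\varphi')^{-1}$ and $(\varphi'')^{-1}$ as functions on $(0,1)$. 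I would drive both processes by the same i.i.d.\ sequence $U_n\in(0,1)$.

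Concretely, I would define, for $n\ge 0$,
$$
Z'_{n+1}=(F')^{-1}\!\bigl(\exp\{-(\varphi')^{-1}(U_{n+1})/Z'_n\}\bigr),\qquad
Z''_{n+1}=(F'')^{-1}\!\bigl(\exp\{-(\varphi'')^{-1}(U_{n+1})/Z''_n\}\bigr),
$$
with the convention that the value is $0$ when the preceding term is $0$. Then $Z'=\mathcal{Z}(F',G',H')$ and $Z''=\mathcal{Z}(F'',G'',H'')$ in distribution, because \eqref{obo2-mvpss} shows each of these sequences has the correct transition law and the prescribed initial distribution. The proof is then an induction on $n$: assuming $Z'_n\le Z''_n$, I combine three monotonicities inside the formula for $Z''_{n+1}$ — first, $(\varphi'')^{-1}(U_{n+1})\le(\varphi')^{-1}(U_{n+1})$ from $G'\prec G''$; second, dividing by the larger $Z''_n$ and negating, together with monotonicity of $\exp$, shows the argument fed to $(F'')^{-1}$ dominates the one fed to $(F')^{-1}$ after we account for all pieces; third, $(F')^{-1}\le(F'')^{-1}$ pointwise from $F'\prec F''$. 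Chaining these yields $Z'_{n+1}\le Z''_{n+1}$, which closes the induction, and the case where one of the terms vanishes is handled by the convention exactly as in Lemma~1.

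The main obstacle, and the step deserving care, is the bookkeeping of the direction of the inequalities through the nested composition, since $\varphi$ is \emph{decreasing} in its argument while $F^{-1}$ and $\exp$ are increasing and the division by $Z_n$ flips sign. I would want to verify cleanly that $G'\prec G''$ (i.e.\ $G'(t)\ge G''(t)$ for all $t$) really gives $\varphi'(u)\ge\varphi''(u)$ for $u>0$ — this is just the statement that a stochastically larger $\nu$ has smaller $\mathbf{E}e^{-u\nu}$, which follows by integration by parts or by the coupling $\nu'\le\nu''$ — and then that this inequality between decreasing functions yields $(\varphi')^{-1}\ge(\varphi'')^{-1}$ as functions on $(0,1)$ under the convention $\varphi^{-1}(y)=\inf\{u:\varphi(u)\le y\}$ appropriate to a decreasing function. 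Once the direction of each of the three monotonicities is pinned down, the induction is immediate. A secondary point worth a remark is that no independence between $V$ (driving the initial conditions) and the $U_n$ is needed beyond what the definition of an MBPPLRE already requires, so the constructed pair lives on the product space $(0,1)\times(0,1)^{\mathbb{N}}$ with the obvious product measure.
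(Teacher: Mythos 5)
Your proposal is correct and follows essentially the same route as the paper: couple the initial conditions through a single uniform variable via the quantile transforms of $H'$ and $H''$, drive both recurrences \eqref{obo2-mvpss} with the same sequence $U_n$, translate $G'\prec G''$ into $\varphi'\ge\varphi''$ and hence $(\varphi')^{-1}\ge(\varphi'')^{-1}$ and $F'\prec F''$ into $(F')^{-1}\le(F'')^{-1}$, and close by induction. Your write-up is in fact more explicit than the paper's about the direction-tracking through the decreasing map $\varphi^{-1}$ and the division by $Z_n$, which is the only delicate point.
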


\begin{proof}
Let $U_0$ be the random variable uniformly distributed on $(0,1)$ and independent
of~$U_n$, $n\ge 1$. Letting $Z'_0=(H')^{-1}(U_0)$ and $Z''_0=(H'')^{-1}(U_0)$, we
obtain $Z'_0\le Z''_0$. Assume that $Z'_n\le Z''_n$ for some $n\ge 0$. $G'\prec G''$
implies $\varphi'(u)\ge \varphi''(u)$, $\forall u>0$, and $(\varphi'(v))^{-1}\ge
(\varphi''(v))^{-1}$, $\forall v\in (0,1)$. $F'\prec F''$ implies $(F')^{-1}(y)\le
(F'')^{-1}(y)$, $\forall y\in (0,1)$. By equation~\eqref{obo2-mvpss} we obtain
$Z'_{n+1}\le Z''_{n+1}$. Proposition~3 is proved by the induction principle.
\end{proof}

Denote the limit distribution of $Z_n$ as $n\to\infty$ (if exists) by $\Psi$.

\begin{proposition}
If for two $\MBPPLRE(T)$ we have $F'\prec F''$ and\/ $G'\prec G''$\textup, then\/
$\Psi'\prec\Psi''$.
\end{proposition}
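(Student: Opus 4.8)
The plan is to reduce the statement to Proposition~3 (pathwise monotonicity under coupling) together with the fact that, under the standing assumption of Harris ergodicity, the limit law of an $\MBPPLRE(T)$ is unique and hence independent of the initial state. Accordingly, I would first fix an arbitrary point $x_0\in T$ and start both processes from the degenerate distribution $H'=H''=\delta_{x_0}$, which trivially satisfies $H'\prec H''$. Applying Proposition~3 to the triples $(F',G',\delta_{x_0})$ and $(F'',G'',\delta_{x_0})$, I would realize $Z'=\mathcal{Z}(F',G',\delta_{x_0})$ and $Z''=\mathcal{Z}(F'',G'',\delta_{x_0})$ on one probability space so that $Z'_n\le Z''_n$ for all $n\ge 0$ almost surely.

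From this pathwise inequality I would deduce the stochastic ordering of the one-dimensional marginals: since $\{Z''_n\le x\}\subset\{Z'_n\le x\}$ a.s., we get ${\bf P}(Z'_n\le x)\ge{\bf P}(Z''_n\le x)$ for every $x\in T$ and every $n$. Passing to the limit $n\to\infty$ --- the limits exist by hypothesis and, by ergodicity, coincide with the limit laws $\Psi'$, $\Psi''$ of the original processes, the convergence being in total variation so that distribution functions converge pointwise --- yields $\Psi'(x)\ge\Psi''(x)$ for all $x$, i.e.\ $\Psi'\prec\Psi''$. (If one prefers to use only convergence in distribution, the inequality first follows at the common continuity points of $\Psi'$ and $\Psi''$ and then extends to all $x$ by right-continuity of distribution functions.)

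The one point needing justification is the opening move: Proposition~3 compares processes launched from \emph{ordered} initial distributions, whereas $\Psi'$ and $\Psi''$ are a priori attached to processes that may have been started from arbitrary, unrelated states. This is legitimate precisely because Harris ergodicity forces the limit distribution to be unique and independent of $Z_0$, so substituting the common initial condition $\delta_{x_0}$ affects neither $\Psi'$ nor $\Psi''$ while making Proposition~3 applicable. Everything else --- inferring stochastic domination from a.s.\ domination and letting $n\to\infty$ in the distribution functions --- is routine, so I expect no real obstacle beyond correctly invoking ergodicity.
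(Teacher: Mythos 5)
Your argument is correct and essentially identical to the paper's proof: both fix a common initial distribution, invoke Proposition~3 to obtain the pathwise coupling $Z'_n\le Z''_n$, pass to the marginal inequality ${\bf P}(Z'_n\le x)\ge{\bf P}(Z''_n\le x)$, and let $n\to\infty$. Your additional remarks on the independence of the limit law from the initial state and on continuity points merely make explicit what the paper leaves implicit.
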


\begin{proof}
Take arbitrary $H'=H''$ on $T$ and construct processes on the same probability space
according to Proposition~3. Then $Z'_n\le Z''_n$ a.s.\ implies ${\bf P}(Z'_n\le x)\ge
{\bf P}(Z''_n\le x)$, $n\ge 1$, whence as $n\to\infty$ we obtain
$\Psi'(x)\ge\Psi''(x)$, $x>0$.
\end{proof}

\section{Ergodic Theorem}

Recall the Gumbel distribution function $\Lambda(x)=\exp\{-e^{-x}\}$, which plays an
important role in stochastic extreme value theory.

If a distribution function $F(x)$ is continuous and strictly increasing and $F(0)=0$,
then equation \eqref{obo2-mvpss} reduces by the transformation
$\zeta_n=\Lambda^{-1}(F(Z_n))$ to the form of the general (nonlinear) first-order
autoregression
\begin{equation}\label{nepr}
\zeta_{n+1}=f(\zeta_n)+\eta_{n+1},\quad n\ge 0,
\end{equation}
where $f(u)=\ln F^{-1}(\Lambda(u))$ and where independent random variables
$\eta_n=-\ln \varphi^{-1}(U_n)$, $n\ge 1$, have distribution function
$\varphi(e^{-x})$.

Ergodicity of such models has been studies, e.g., in [\citen{Borov},~Section~8.4;
\citen{Bhat}].

Denote $\delta={\bf E}\eta_1$ and assume that this mean value does exist. We have
$$
\varphi(e^{-x})={\bf E}\exp\{-\nu e^{-x}\}={\bf E}\Lambda(x-\ln\nu),
$$
which implies
\begin{equation}\label{oprdelta}
\delta=\gamma+{\bf E}\ln\nu,
\end{equation}
since the Gumbel distribution $\Lambda$ has mathematical expectation $\gamma$.

Note that $\delta$, as the mathematical expectation under the distribution function
$\varphi(e^{-x})$, can also be represented as
\begin{equation}\label{formdelta}
\delta=\int_0^{+\infty}(1-\varphi(e^{-x}))\,dx-\int_{-\infty}^0\varphi(e^{-x})\,dx=
\int_0^\infty(1-\varphi(e^{-x}))\,dx-\int_0^\infty\varphi(e^x)\,dx
\end{equation}
when all the integrals converge.

\begin{example}
Let $F(x)=\exp\{-(x/c)^{-\beta}\}$, $x,c,\beta>0$ (Fr\'echet distribution). Then the
MBPPLRE admits a constructive representation
\begin{equation}\label{zw}
Z_{n+1}=W_{n+1}Z_n^{1/\beta},
\end{equation}
where $W_n$, $n\ge 1$, are independent and have distribution function
$\varphi((x/c)^{-\beta})$, and \eqref{nepr} can be rewritten in the linear
autoregression form
\begin{equation}\label{zb}
\zeta_{n+1}=\zeta_n/\beta+\ln c+\eta_{n+1},\quad n\ge 0.
\end{equation}

For $\beta<1$, the process $\{\zeta_n\}$ goes to $\pm\infty$ depending on the sign of
the initial condition~$\zeta_0$. For $\beta>1$, the process $\{\zeta_n\}$ is ergodic.
For $\beta=1$, we have a simple random walk going to $+\infty$ when $c>e^{-\delta}$,
to $-\infty$ when $c<e^{-\delta}$, and oscillating between $\pm\infty$ when
$c=e^{-\delta}$.

From this, one can easily obtain results for $\{Z_n\}$ taking into account that
$Z_n=F^{-1}(\Lambda(\zeta_n))$, whence $Z_n\to 0$ for $\zeta_n\to -\infty$ and
$Z_n\to +\infty$ for $\zeta_n\to +\infty$, and ergodicity is preserved.
\end{example}

This example suggests that the following theorem holds.

\begin{theorem}
If\/ $\MBPPLRE(T)$\textup, $T\subset (0,+\infty)$\textup, satisfies the conditions
\begin{equation}\label{usl1}
\liminf_{x\to+\infty} x(-\ln F(x))<e^{-\delta}
\end{equation}
and
\begin{equation}\label{usl2}
\liminf_{x\to 0} x(-\ln F(x))>e^{-\delta},
\end{equation}
where\/ $\delta$ in\/ \eqref{oprdelta} exists\textup, then the process is ergodic.
\end{theorem}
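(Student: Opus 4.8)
The idea is to reduce the assertion to the ergodicity of the first-order autoregression \eqref{nepr} and then to repeat the argument behind Theorem~A with the Euler constant $\gamma$ replaced by $\delta$ of \eqref{oprdelta}. First I would assume that $F$ is continuous, strictly increasing, and $F(0)=0$, so that $\zeta_n=\Lambda^{-1}(F(Z_n))$ is a measurable bijection of $T$ onto an interval and \eqref{obo2-mvpss} becomes \eqref{nepr} with $f(u)=\ln F^{-1}(\Lambda(u))$ nondecreasing and i.i.d.\ innovations $\eta_n$ of mean ${\bf E}\eta_1=\delta<\infty$; the general case reduces to this one by approximating $F$ from above and below by such distributions that still obey \eqref{usl1}--\eqref{usl2} and using the monotonicity Propositions~3 and~4 to squeeze $\{Z_n\}$ between two ergodic processes.

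Next I would rewrite the hypotheses in terms of the recursion. Putting $x=F^{-1}(\Lambda(u))$ one gets $-\ln F(x)=e^{-u}$ and $\ln x=f(u)$, hence
$$x\,(-\ln F(x))=e^{\,f(u)-u},\qquad x\to+\infty\iff u\to+\infty,\qquad x\to0\iff u\to-\infty,$$
so \eqref{usl1} reads $\liminf_{u\to+\infty}(f(u)-u)<-\delta$ and \eqref{usl2} reads $\liminf_{u\to-\infty}(f(u)-u)>-\delta$. Since the mean one-step displacement of the chain equals
$${\bf E}\bigl(\zeta_{n+1}-\zeta_n\mid\zeta_n=u\bigr)=f(u)-u+\delta,$$
condition \eqref{usl2} is an eventual \emph{upward} drift for all sufficiently negative $u$, which prevents $\zeta_n\to-\infty$, i.e.\ $Z_n\to0$; and \eqref{usl1}, together with the monotonicity of $f$, yields the opposing control at $+\infty$, since a single level $u_k$ with $f(u_k)\le u_k-\delta-\varepsilon$ forces $f\le u_k-\delta-\varepsilon$ on all of $(-\infty,u_k]$ and thereby bounds the excursions of $\{\zeta_n\}$ above the arbitrarily high levels $u_k$, ruling out $\zeta_n\to+\infty$, i.e.\ $Z_n\to+\infty$.

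With these two one-sided drift estimates, and with $\psi$-irreducibility and aperiodicity in hand (the innovation $\eta_1=-\ln\varphi^{-1}(U_1)$ has a density on an interval because $\varphi^{-1}$ is a smooth strictly monotone map of $(0,1)$, and $F^{-1}(\Lambda(\cdot))$ is continuous), I would invoke a standard ergodicity criterion for monotone first-order autoregressions of the kind in \cite[Section~8.4]{Borov} and \cite{Bhat}: the chain $\{\zeta_n\}$ is positive Harris recurrent and converges in distribution to its unique stationary law. Transferring back through $Z_n=F^{-1}(\Lambda(\zeta_n))$ preserves Harris ergodicity, so $\{Z_n\}$ is ergodic, with limit distribution $\Psi$ the image of the stationary law of $\{\zeta_n\}$; the Fr\'echet Example is recovered as the linear case $f(u)=u/\beta+\ln c$.

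The main obstacle I foresee is the asymmetry of the two hypotheses: \eqref{usl2} is a genuine eventual drift condition, whereas \eqref{usl1} only controls $f(u)-u$ along a sequence $u_k\uparrow+\infty$, and turning this into a return-time estimate for $\{\zeta_n\}$ to a fixed compact set — strong enough to exclude transience to $+\infty$ — requires real use of the monotonicity supplied by Lemma~1 together with a careful excursion/renewal argument above the levels $u_k$, where the overshoots must be controlled; this is the point at which the present proof departs from the $\nu\equiv1$ case of Theorem~A.
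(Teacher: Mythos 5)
Your overall strategy --- a Foster--Lyapunov drift argument for the logarithm of the chain, with the constant $\gamma$ of Theorem~A replaced by $\delta$ --- is the same as the paper's. The paper takes $g(x)=(\ln(x/x_2))_++(\ln(x_1/x))_+$ as Lyapunov function, computes ${\bf E}(g(Z_{n+1})\,|\,Z_n=x)-g(x)$ directly from the kernel \eqref{obo-mvpss-2} (so no continuity or strict monotonicity of $F$ is assumed and no sandwich argument for the general case is needed), shows via \eqref{formdelta} that the drift is eventually $\le-\varepsilon$ as $x\to 0$ and as $x\to+\infty$, and verifies the minorization \eqref{perem} on the compact set $V=[v_1,v_2]$ using convexity of $\varphi$. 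Your translation of the hypotheses into $\liminf_{u\to+\infty}(f(u)-u)<-\delta$ and $\liminf_{u\to-\infty}(f(u)-u)>-\delta$, with one-step drift $f(u)-u+\delta$, is correct and is just the transformed-chain version of the same computation.

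The genuine gap is exactly the point you flag as the ``main obstacle,'' and it cannot be repaired in the form you propose. The paper's first step is to replace \eqref{usl1} by the \emph{uniform} bound $F(x)\ge e^{-c_2/x}$ for all $x\ge x_2$ with $c_2<e^{-\delta}$, i.e.\ it uses \eqref{usl1} as a $\limsup$ condition (the form appearing in \eqref{ulam} and Theorem~A); only this uniform bound enters the drift estimate. You instead take the $\liminf$ literally and hope that monotonicity of $f$, via $f\le u_k-\delta-\varepsilon$ on $(-\infty,u_k]$, bounds the excursions above the levels $u_k$. It does not: that inequality only controls the one-step probability of crossing $u_k$ from below (it is at most ${\bf P}(\eta_1>\delta+\varepsilon)$, which is typically positive, so every level is eventually crossed) and says nothing about the dynamics \emph{above} $u_k$. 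If the gaps $u_{k+1}-u_k$ grow, $f$ can be nondecreasing, satisfy $f(u_k)\le u_k-\delta-\varepsilon$ at every $u_k$, and still have $f(u)-u+\delta$ large and positive on most of $(u_k,u_{k+1})$; take for instance $f\equiv u_{k+1}-\delta-\varepsilon$ on $(u_k,u_{k+1}]$ with $u_k=2^k$. For such an $f$ the index of the band $(u_k,u_{k+1}]$ containing $\zeta_n$ increases by one with probability about ${\bf P}(\eta_1>\delta+\varepsilon)$ per step and essentially never decreases, so $\zeta_n\to+\infty$ a.s.\ even though $\liminf_{u\to+\infty}(f(u)-u)\le-\delta-\varepsilon$. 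Hence no excursion or renewal refinement can close this gap; the proof must use the uniform tail bound $x(-\ln F(x))\le c_2$ for all large $x$, as the paper's does. A secondary issue: being squeezed between two ergodic processes (your reduction of the general case via Propositions~3 and~4) gives tightness but not Harris ergodicity; the paper avoids this by working with the kernel directly and checking the minorization condition on $V$.
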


\begin{proof}
Without loss of generality we may assume that $T=(0,+\infty)$. First of all, note
that conditions \eqref{usl1} and \eqref{usl2} are equivalent to the claim that there
exist $0<x_1<x_2$ and $0<c_2<e^{-\delta}<c_1$ such that $F(x)\le e^{-c_1/x}$ for
$x\le x_1$ and $F(x)\ge e^{-c_2/x}$ for $x\ge x_2$.

As a Lyapunov function, consider $g(x)=(\ln(x/x_2))_++(\ln(x_1/x))_+$ with
$y_+=\max\{0,y\}$. Denote{\sloppy
$$
\mu(x)={\bf E}(g(Z_{n+1})\,|\,Z_n=x)-g(x).
$$
Then}
\begin{align*}
\mu(x)&={\bf E}\{\ln(Z_{n+1}/x_2)_+\,|\,Z_n=x\}+ {\bf
E}\{\ln(x_1/Z_{n+1})_+\,|\,Z_n=x\}-g(x)\\ &=\int_0^\infty{\bf
P}(\ln(Z_{n+1}/x_2)>y\,|\,Z_n=x)\,dy+ \int_0^\infty{\bf
P}(\ln(x_1/Z_{n+1})>y\,|\,Z_n=x)\,dy-g(x)\\ &=\int_0^\infty(1-\varphi(-x\ln
F(x_2e^y)))\,dy+\int_0^\infty\varphi(-x\ln F(x_1e^{-y})))\,dy-g(x)\\
&\le\int_0^\infty(1-\varphi((c_2x/x_2)e^{-y}))\,dy
+\int_0^\infty\varphi((c_1x/x_1)e^y))\,dy-g(x)\\
&=\int_{-\ln(c_2x/x_2)}^\infty(1-\varphi(e^{-z}))\,dz
+\int_{\ln(c_1x/x_1)}^\infty\varphi(e^z)\,dz-g(x)\\
&=\int_0^\infty(1-\varphi(e^{-z}))\,dz+\int_0^\infty\varphi(e^z)\,dz+
\int_{-\ln(c_2x/x_2)}^0(1-\varphi(e^{-z}))\,dz\\ &\quad\strut
-\int_0^{\ln(c_1x/x_1)}\varphi(e^z)\,dz -((\ln(x/x_2))_++(\ln(x_1/x))_+).
\end{align*}
Denote the obtained upper estimate for $\mu(x)$ by $\mu^*(x)$, and let
$$
{\tilde\mu}(x)=\int_{-\ln(c_2x/x_2)}^0(1-\varphi(e^{-z}))\,dz
-\int_0^{\ln(c_1x/x_1)}\varphi(e^z)\,dz -((\ln(x/x_2))_++(\ln(x_1/x))_+).
$$
Then
\begin{equation}\label{formmu}
\mu(x)\le \mu^*(x)=\int_0^\infty(1-\varphi(e^{-z}))\,dz
+\int_0^\infty\varphi(e^{-z})\,dz+{\tilde\mu}(x).
\end{equation}

For $x\ge x_2$, we have
\begin{align*}
{\tilde\mu}(x)&=\int_{-\ln(c_2x/x_2)}^0(1-\varphi(e^{-z}))\,dz
-\int_0^{\ln(c_1x/x_1)}\varphi(e^z)\,dz-\ln(c_2x/x_2)+\ln c_2\\
&=-\int_{-\ln(c_2x/x_2)}^0\varphi(e^{-z})\,dz-\int_0^{\ln(c_1x/x_1)}\varphi(e^z)\,dz
+\ln c_2\\ &\to -2\int_0^\infty\varphi(e^z)\,dz+\ln c_2,\quad x\to+\infty,
\end{align*}
whence by \eqref{formdelta} and \eqref{formmu} we obtain
$\mu(x)\le\mu^*(x)\to\delta+\ln c_2=\ln (c_2/e^{-\delta})<0$, $x\to +\infty$.

For $x\le x_1$, we have
\begin{align*}
{\tilde\mu}(x)&=\int_{-\ln(c_2x/x_2)}^0(1-\varphi(e^{-z}))\,dz
-\int_0^{\ln(c_1x/x_1)}\varphi(e^z)\,dz+\ln(c_1x/x_1)-\ln c_1\\
&=-\int_{-\ln(c_2x/x_2)}^0\varphi(e^{-z})\,dz
-\int_0^{-\ln(c_1x/x_1)}(1-\varphi(e^{-z}))\,dz -\ln c_1\\ &\to
-2\int_0^\infty(1-\varphi(e^{-z}))\,dz-\ln c_2,\quad x\to 0,
\end{align*}
whence by \eqref{formdelta} and \eqref{formmu} we obtain $\mu(x)\le\mu^*(x)\to
-\delta-\ln c_1=-\ln (c_1/e^{-\delta})<0$, $x\to 0$.

Hence, there exist $\varepsilon>0$ and $0<v_1\le v_2$ such that
$\mu(x)\le-\varepsilon$ for $x\notin V=[v_1,v_2]$. Furthermore, $\sup_{x\in V}{\bf
E}(g(Z_{n+1})\,|\,Z_n=x)<\infty$. Thus, the Lyapunov conditions
\cite[Section~4.2]{Borov} are satisfied.

Now let us check the mixing condition.

Let $0<u_1\le u\le u_2$, $0<a<b$. Since $\varphi(s)$ is convex and decreasing, we
have
$$
\varphi(-u\ln F(b))-\varphi(u\ln F(a))\ge\frac{u_1}{u_2}(\varphi(-u_2\ln
F(b))-\varphi(-u_2\ln F(a)),
$$
which implies that for any measurable $B$ we have
\begin{equation}\label{perem}
{\bf P}(Z_{n+1}\in B\,|\,Z_n=x)\ge\frac{v_1}{v_2}{\bf P}(Z_{n+1}\in B\,|\,Z_n=v_2),
\quad \forall x\in V.
\end{equation}

Furthermore, any MBPPLRE is irreducible and aperiodic (in other words, from any state
$x\in T$ one can get into any set $B\subset T$ and, moreover, in one step). Now the
Lyapunov conditions  and \eqref{perem} imply the ergodicity of the MBPPLRE according
to \cite[Section~2, Theorem~2]{Borov}.
\end{proof}

Note that if $\nu=1$ a.s., Theorem~1 reduces to Theorem~A, since in this case
$\delta=\gamma$.

In some cases it is more convenient to compute $\delta$ by the definition than by
equation~\eqref{oprdelta}.

\begin{example}
Let $\nu$ have exponential distribution with mean $\theta$. Then
$\varphi(u)=(1+\theta u)^{-1}$ and
$$
\varphi(e^{-x})=\frac{1}{1+\theta e^{-x}}=\frac{1}{1+e^{-(x-\ln\theta)}};
$$
i.e., we obtain a logistic distribution with shift parameter $\ln\theta$. Hence,
$\delta=\ln\theta$.
\end{example}

\begin{example}
Let $\nu$ have a strictly stable distribution with $\varphi(u)=e^{-cu^\alpha}$,
$c>0$, $0<\alpha<1$. Then
$$
\varphi(e^{-x})=\exp\{-ce^{-\alpha x}\}=\Lambda(\alpha x-\ln c),
$$
whence
$$
\delta=\frac{\gamma+\ln c}{\alpha}.
$$
\end{example}

This, by the way, provides a convenient method for computing the mean logarithm of a
stable random variable. Using \eqref{oprdelta}, we obtain
$$
{\bf E}\ln\nu=\frac{\gamma(1-\alpha)+\ln c}{\alpha}.
$$

For an ergodic $\MBPPLRE(T)$, denote the random variable with the limit distribution
by ${\tilde Z}$. In some cases, we can find numerical characteristics of this
distribution.

\begin{example}
Let $\nu$ have a strictly stable distribution with $\varphi(u)=e^{-u^\alpha}$,
$0<\alpha<1$, and $F(x)=\exp\{-x^{-\beta}\}$, $x,\beta>0$. Then we obtain
representation \eqref{zw}, where the $W_n$, $n\ge 1$, have the Fr\'echet distribution
function  $\exp\{-x^{-\alpha\beta}\}$, $x>0$, and
$$
{\bf E}W_1^s=\Gamma\left(1-\frac{s}{\alpha\beta}\right),\quad 0<s<\alpha\beta.
$$
Representation \eqref{zw} and ergodicity of the process imply that the limit
distribution is the distribution of the following infinite product, which converges
a.s.:
$$
{\tilde Z}\stackrel{d}{=}\prod_{n=0}^\infty W_n^{1/\beta^n},
$$
whence
$$
{\bf E}{\tilde Z}^s=\prod_{n=1}^\infty
\Gamma\left(1-\frac{s}{\alpha\beta^n}\right),\quad 0<s<\alpha\beta.
$$
\end{example}

Clearly, Proposition 4 also holds in the cases where one or both of the limit
distributions is/are concentrated at zero. From this, in particular, one can obtain
the degeneracy condition for processes with $F(0)>0$. Here, by the degeneracy we mean
vanishing of the process starting from some (random) moment.

\begin{corollary}
If an\/ $\MBPPLRE(\mathbb{R}_+)$ satisfies\/ \eqref{usl1} and\/ $F(0)>0$\textup, then
the process degenerates a.s.
\end{corollary}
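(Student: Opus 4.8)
The plan is to exploit two features already present in the excerpt: the state $0$ is absorbing, and, writing $p=F(0)$, for every $x>0$ the one‑step absorption probability is
$$
{\bf P}(Z_{n+1}=0\,|\,Z_n=x)=\varphi(-x\ln F(0))=\varphi\bigl(x\ln(1/p)\bigr),
$$
which is strictly positive when $p<1$ (since $\varphi>0$) but decreases to $0$ as $x\to+\infty$. Hence the only way degeneration can fail is that the process escapes to $+\infty$, and this is exactly what \eqref{usl1} must prevent. (If $p=1$ then \eqref{obo2-mvpss} gives $Z_{n+1}=0$ identically and there is nothing to prove, so assume $0<p<1$.) Accordingly I would split the argument into three steps: (a)~from \eqref{usl1}, show that the chain visits a fixed compact set $C=[0,M]$ at infinitely many times a.s.; (b)~from $F(0)>0$, bound the absorption probability below, uniformly on $C$; (c)~conclude by a Borel--Cantelli argument through the Markov property.

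For (a) I would reuse the drift computation from the proof of Theorem~1, but only its ``$x\to+\infty$'' half, because near $0$ we no longer have \eqref{usl2}. By \eqref{usl1} (as recorded at the start of that proof) fix $x_2>0$ and $c_2\in(0,e^{-\delta})$ with $F(x)\ge e^{-c_2/x}$ for $x\ge x_2$, and take the one‑sided Lyapunov function $g(x)=(\ln(x/x_2))_+$. Running the lines of the Theorem~1 computation for $x\ge x_2$ with this $g$ (the $(\ln(x_1/x))_+$ summand and hypothesis \eqref{usl2} simply drop out), one obtains, using \eqref{formdelta},
$$
{\bf E}(g(Z_{n+1})\,|\,Z_n=x)-g(x)\ \longrightarrow\ \delta+\ln c_2=\ln\bigl(c_2/e^{-\delta}\bigr)<0,\qquad x\to+\infty.
$$
Finiteness of ${\bf E}(g(Z_{n+1})\,|\,Z_n=x)$ for each $x$, and its boundedness for $x\in[0,M]$, is the same integral estimate used for $\mu^*$ in Theorem~1 (one reaches $\int_0^{c_2x/x_2}(1-\varphi(z))z^{-1}\,dz$), which converges because $\delta$ exists. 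Thus there are $M\ge x_2$ and $\varepsilon>0$ with ${\bf E}(g(Z_{n+1})\,|\,Z_n=x)-g(x)\le-\varepsilon$ for $x\notin C:=[0,M]$ and $\sup_{x\in C}{\bf E}(g(Z_{n+1})\,|\,Z_n=x)<\infty$. By the Lyapunov criterion \cite[Section~4.2]{Borov} the chain reaches $C$ from any state in finite expected time, and iterating this with the Markov property gives that a.s.\ the chain is in $C$ at infinitely many (a.s.-finite) times $\tau_1<\tau_2<\cdots$; if it happens to be absorbed at $0\in C$ this holds trivially.

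For (b)--(c): for $x\in(0,M]$ we have ${\bf P}(Z_{n+1}=0\,|\,Z_n=x)=\varphi(x\ln(1/p))\ge\varphi(M\ln(1/p))=:q>0$, since $\varphi$ is positive and decreasing. Conditionally on the history up to $\tau_k$, the probability that $Z_{\tau_k+1}=0$ is therefore at least $q$ (and equals $1$ if $Z_{\tau_k}=0$), so ${\bf P}(Z_{\tau_k+1}\ne0,\ 1\le k\le K)\le(1-q)^K\to0$; since $0$ is absorbing, the chain is eventually trapped at $0$ a.s.\ --- that is, the process degenerates. (Equivalently, its limit distribution is the unit mass at $0$, which is the degenerate instance of Proposition~4 alluded to just before the corollary.)

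The only real work is step (a): checking that the Theorem~1 drift estimate survives the removal of the $(\ln(x_1/x))_+$ term and of hypothesis \eqref{usl2}, and that all the conditional expectations involved are genuinely finite --- both of which rest on the existence of $\delta$. Once $C$ is recurrent, steps (b) and (c) are routine, precisely because $F(0)>0$ forces a uniformly positive absorption probability on the compact $C$.
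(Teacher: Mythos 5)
Your argument is correct, but it follows a genuinely different route from the paper's. The paper first shows, by a one-line telescoping of the monotone sequence ${\bf P}(Z_n=0)$ combined with ${\bf P}(Z_{n+1}=0)\ge {\bf P}(Z_n=0)+\varphi(-C\ln F(0)){\bf P}(0<Z_n\le C)$, that $\sum_n{\bf P}(0<Z_n\le C)<\infty$ for every $C>0$; Borel--Cantelli then yields the dichotomy ``degenerate or tend to $+\infty$''. It then rules out escape to infinity not by a drift estimate but by domination: it replaces $F$ with $F^*(x)=F(x)\exp\{-x^{-2}\}{\bf I}(x>0)$, which satisfies \emph{both} \eqref{usl1} and \eqref{usl2}, invokes the monotone coupling of Proposition~3 to get $Z_n\le Z_n^*$ a.s., and uses the ergodicity of $Z^*$ from Theorem~1 to conclude ${\bf P}(Z_n\to+\infty)=0$. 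You instead prevent escape to infinity directly, by rerunning the $x\to+\infty$ half of the Theorem~1 drift computation with the one-sided Lyapunov function $(\ln(x/x_2))_+$ (which does survive the removal of \eqref{usl2} and of the $(\ln(x_1/x))_+$ summand, the limit of the drift bound being $\delta+\ln c_2<0$ exactly as in the paper), and then you finish with a geometric-trials argument off the uniformly positive absorption probability $\varphi(M\ln(1/p))$ on the recurrent compact set. The trade-off: the paper's proof is shorter because it reuses Theorem~1 and Proposition~3 as black boxes and needs no new drift analysis, and its intermediate dichotomy is of independent interest; your proof is more self-contained (no auxiliary distribution $F^*$, no coupling) and makes visible exactly where each hypothesis acts --- \eqref{usl1} confines the chain to a compact set infinitely often, $F(0)>0$ then forces absorption. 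Both establish the corollary.
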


\begin{proof}
Note that for any $C>0$ and $n\ge 0$ by equation \eqref{obo-mvpss-2} we have
$$
{\bf P}(Z_{n+1}=0)={\bf E}\varphi(-Z_n\ln F(0))\ge {\bf P}(Z_n=0)+\varphi(-C\ln
F(0)){\bf P}(0<Z_n\le C).
$$
The sequence ${\bf P}(Z_n=0)$ is monotone nondecreasing and bounded, and therefore
tends to some limit $p_0\in (0,1]$. We obtain
$$
\sum_{n=0}^\infty{\bf P}(0<Z_n\le C)\le p_0/\varphi(-C\ln F(0))<\infty,
$$
so by the Borel--Cantelli lemma $Z_n$ gets into $(0,C]$ finitely many times a.s.\ for
any $C>0$, which may mean either degeneracy or going to infinity as $n\to\infty$.

Let $F^*(x)=F(x)\exp\{-x^{-2}\}{\bf I}(x>0)$; then $F\prec F^*$. By Proposition~3,
one can construct an $\MBPPLRE(\mathbb{R}_+)$ with $F^*$ such that $Z_n\le Z^*_n$
a.s\@. Note that $F^*$ satisfies the conditions of Theorem~1, so that ${\bf
P}(Z^*_n\to+\infty)=0$, and hence ${\bf P}(Z_n\to+\infty)=0$ too. Thus, the process
degenerates a.s.
\end{proof}

Up to now, we assumed that $\delta$ is finite. However, a limit distribution may in
some cases exist as well for $\delta=+\infty$, which corresponds to super-heavy tails
$G$.

\begin{example}
Let $F(x)=\exp\{-x^{-\beta}\}$, $x>0$, $\beta>1$, and $G(x)=1-1/\ln x$, $x\ge e$.
Then by the Tauber theorem we have $1-\varphi(u)\sim -1/\ln u$, $u\to 0$, whence
\begin{equation}\label{etaphi}
{\bf P}(\eta_1>x)=1-\varphi(e^{-x})\sim 1/x,\quad x\to+\infty.
\end{equation}
On the other hand, since $\nu\ge e$ a.s., we have $\varphi(u)\le e^{-eu}$, $u\ge 0$,
whence
\begin{equation}\label{etaphi2}
{\bf P}(\eta_1<-x)=\varphi(e^x)\le \exp\{-e^{x+1}\},\quad x>0.
\end{equation}

It follows from \eqref{zb} that existence of a limit distribution is equivalent to
convergence of the following random series a.s.:
\begin{equation}\label{rzb}
{\tilde\zeta}\stackrel{d}{=}\sum_{n=0}^\infty\beta^{-n}\eta_n,
\end{equation}
where ${\tilde\zeta}=\Lambda^{-1}(F({\tilde Z}))$. For any $1/\beta<\varepsilon<1$ by
virtue of \eqref{etaphi} and \eqref{etaphi2} we obtain
$$
\sum_{n=0}^\infty {\bf P}(|\eta_n|>(\varepsilon\beta)^n)<\infty,
$$
whence it follows by the Borel--Cantelli lemma that the events
$A_n=\{\beta^{-n}|\eta_n|>\varepsilon^n\}$ occur at most finitely many times; hence,
the series \eqref{rzb} converges a.s.
\end{example}

\section{Conclusion}

We have introduced maximal branching processes in random environment (with a single
particle type). We have examined the case of ``power-law'' random environment; for
this case we have studied a number of properties, proved an ergodic theorem, and
considered examples. We have noted a relation between maximal branching process and
infinite-server queues. Further research can address the analysis of a wider class of
random environments and processes with immigration.

\end{document}